\newtheorem{theorem}{Theorem}
\newtheorem{lemma}{Lemma}
\newtheorem{proposition}{Proposition}
\theoremstyle{definition}
\newtheorem*{problem}{Problem}
\begin{document}

\title{The mathematics of the flip and horseshoe shuffles}
\date{\empty}

\author{Steve Butler\thanks{Dept.\ of Mathematics, Iowa State University, Ames, IA 50011, USA. {\tt butler@iastate.edu}} \and Persi Diaconis\thanks{Dept.\ of Statistics and Dept.\ of Mathematics, Stanford University, Stanford, CA 94305, USA. {\tt diaconis@math.stanford.edu}} \and Ron Graham\thanks{Dept.\ of Computer Science and Engineering, UC San Diego, La Jolla, CA 92093, USA. {\tt graham@ucsd.edu}}}

\maketitle

\begin{abstract}
We consider new types of perfect shuffles wherein a deck is split in half, one half of the deck is ``reversed'', and then the cards are interlaced.  Flip shuffles are when the reversal comes from  flipping the half over so that we also need to account for face-up/face-down configurations while horseshoe shuffles are when the order of the cards are reversed but all cards still face the same direction.  We show that these shuffles are closely related to faro shuffling and determine the order of the associated shuffling groups.
\end{abstract}

\section{Introduction}
We were led to the problems in this note by a question from a magician friend, Jeremy Rayner, who sent a picture similar to Figure~\ref{fig:picture} along with a description of a new form of perfect shuffling \cite{rayner}.  

\begin{figure}[hftb]
\centering
\includegraphics[width=4.5in]{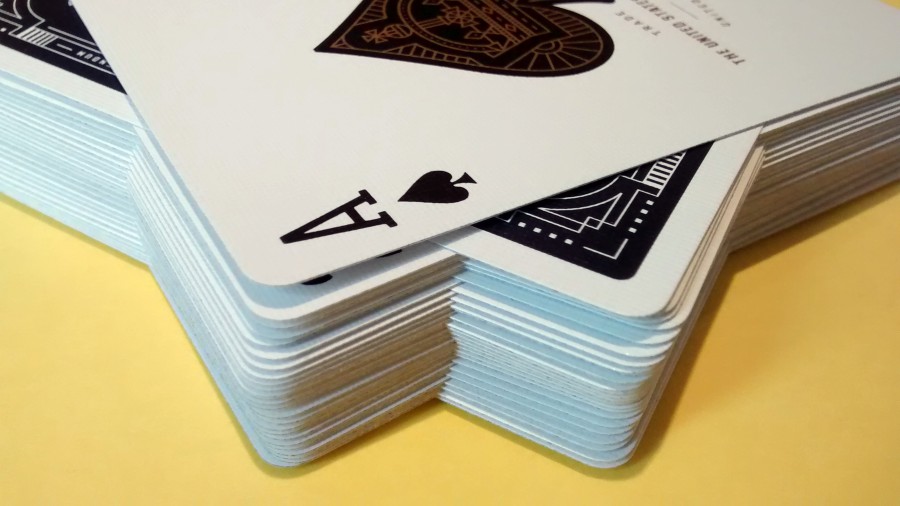}
\caption{A deck in the process of being flip shuffled}
\label{fig:picture}
\end{figure}

Crooked gamblers and skillful magicians have learned to shuffle cards perfectly; cutting off exactly half the deck and riffling the two halves together so they exactly alternate.  A more careful description and review of the traditional form of perfect shuffling is in Section~\ref{sec:faro}.  Our magician friend wanted to know what happens if one of the halves is turned face-up before shuffling.  We will call this a \emph{flip shuffle}.  Figure~\ref{fig:flip1} shows what happens for the (out) flip shuffle with a $10$ card deck (labeled $0,1,\ldots,9$ from top to bottom), and  dashed lines indicate a card has been turned over.

\begin{figure}[hft]
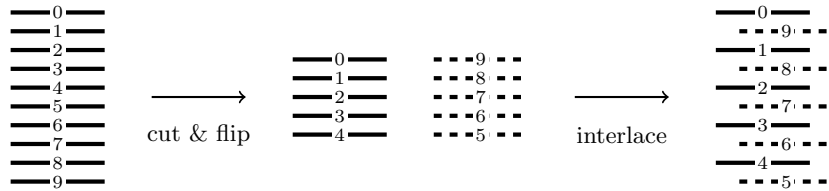

\centering
\picA
\caption{An out flip shuffle with $10$ cards}
\label{fig:flip1}
\end{figure}

Carefully repeating this $18$ times will return the deck of $10$ cards to the original order and same face-up/face-down configuration.  (One of the more studied questions related to perfect shuffles is the number of times needed to return a deck to the starting order.)

We note that normal perfect shuffles are not easy to perform (even for seasoned professionals), and this additional element of flipping cards over makes it even more difficult.  However, it is \emph{easy} to perform inverse shuffles.  To carry out the inverse of the (out) flip shuffle (i.e., as shown in Figure~\ref{fig:flip1}) we start with the deck and then deal it alternately between two piles, turning each card over before placing it on its pile.  Finally, turn the entire first pile over and place it on the second pile.  For the $10$ card deck this process is shown in Figure~\ref{fig:flip2} and the interested reader can again verify that it will take $18$ of these operations to return the deck to its starting configuration.

\begin{figure}[hft]
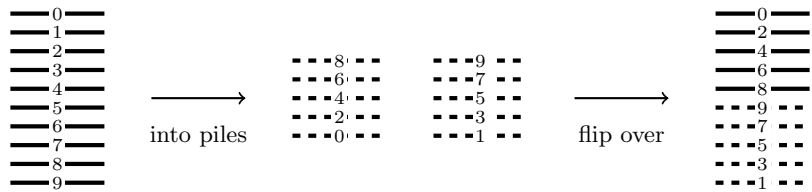

\centering
\picB
\caption{An inverse out flip shuffle with $10$ cards}
\label{fig:flip2}
\end{figure}

There is a second type of flip shuffle depending on how we choose to interlace.  What we have discussed so far corresponds to the \emph{out} flip shuffle (since the original top card remains on the \emph{out}side).  There is also an \emph{in} flip shuffle (where the original top card now winds up on the \emph{in}side).  The in flip shuffle for $10$ cards is shown in Figure~\ref{fig:flip3}.  An inverse flip shuffle deals into two piles as before but finishes by turning the second pile over and placing it on the first pile.  This is shown for $10$ cards in Figure~\ref{fig:flip4}.

\begin{figure}[hft]
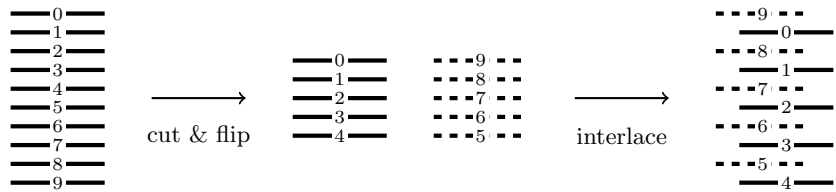

\centering
\picC
\caption{An in flip shuffle with $10$ cards}
\label{fig:flip3}
\end{figure}

\begin{figure}[hft]
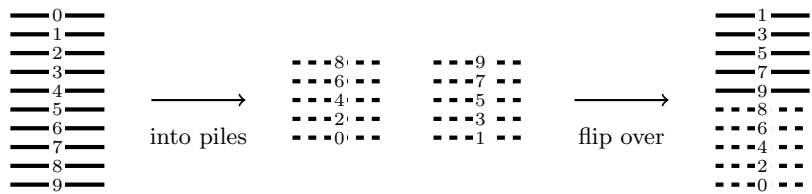

\centering
\picD
\caption{An inverse in flip shuffle with $10$ cards}
\label{fig:flip4}
\end{figure}

The `in' flips behave differently from the `out' flips, and the two can be combined in various ways to achieve different effects.  This leads to the main question in the field, namely, what can be done by combining these two types of shuffles.  The main result of this note is to completely determine the number of different orderings that can be achieved with these shuffling operations.

We first start by doing a review of what is known for ``ordinary'' perfect shuffles, also known as faro shuffling in Section~\ref{sec:faro}.  We then address what happens for flip shuffling in Section~\ref{sec:flip}.  Then in Sections~\ref{sec:horseshoe} and \ref{sec:poweroftwo} we will look at horseshoe shuffling, a face-down only version of flip shuffling.  In Section~\ref{sec:other} we make further connections with other types of shuffling, including a way to combine these different shuffling operations together before finally giving concluding remarks in Section~\ref{sec:conclusion}.

\section{Faro shuffling}\label{sec:faro}
The faro shuffle is the classic way of performing perfect shuffles.  This is carried out by splitting a deck of $2n$ cards into two equal piles of size $n$ and then perfectly interlacing the two piles.  Again there are two possibilities, either the top card remains on the outside (an out shuffle) or it moves to the second position (an in shuffle).  The two types are illustrated in Figure~\ref{fig:inout} for a deck with $10$ cards.

\begin{figure}[hft]
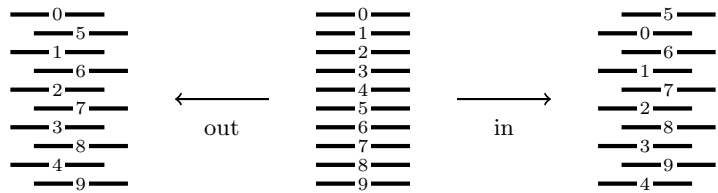

\centering
\picE
\caption{Examples of in and out faro shuffles for a deck with 10 cards}
\label{fig:inout}
\end{figure}

Faro shuffles are among the most well known shuffles both for mathematicians and magicians alike.  For example, starting with a standard deck of $52$ cards then performing eight consecutive out shuffles will return the deck to the starting order.  Combining these shuffles it is also possible to move the top card to any position in the deck, to do this label the positions as $0,1,2,\ldots$ and write the desired position in binary.  Now perform shuffles by reading from left to right where a ``$1$'' is an in shuffle and a ``$0$'' is an out shuffle.  So for example to move the card into the position $20$ we write this in binary as $10100$ and so perform the following shuffles: in, out, in, out, out.  Conversely there is a way to use in and out shuffles to move a card in any position in the deck to the top (see \cite{elmsley, elmsley2}).

A natural question is to determine what orderings of the deck are possible, and what sequence of shuffles leads to a desirable ordering.  Thinking of these shuffles as operators on the deck of cards then this problem becomes determining the ``shuffle group'', i.e., the group which acts on the deck generated by the in and out shuffles.  This work for faro shuffling was previously carried out (see \cite{DGK}), and the following was determined.

\begin{theorem}[Diaconis-Graham-Kantor] \label{thm:faro}
Let $Faro(2n)$ denote the shuffle group generated using in and out faro shuffles on a deck of $2n$ cards.  Then the following holds:
\[
|Faro(2n)|=\left\{
\begin{array}{r@{\quad}l}
2^9{\cdot}3{\cdot}5&\text{if $2n=12$,}\\
2^{17}{\cdot}3^3{\cdot}5{\cdot}11&\text{if $2n=24$,}\\
k2^{k\phantom{-1}}&\text{if $2n=2^k$,}\\
n!2^{n\phantom{-1}}&\text{if $n\equiv2\pmod4$ and $2n\ne 4,12$,}\\
n!2^{n-1}&\text{if $n\equiv1\pmod2$,}\\
n!2^{n-2}&\text{if $n\equiv0\pmod4$ and $2n\ne24,2^k$.}
\end{array}\right.
\]
(Note $|Faro(2n)|$ is the number of possible arrangements of a deck of $2n$ cards.)
\end{theorem}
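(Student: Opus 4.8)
The plan is to realize $Faro(2n)$ as a subgroup of the hyperoctahedral (signed-permutation) group and then pin down exactly which subgroup it is. First I would fix coordinates: label positions $0,1,\dots,2n-1$ and record that the out shuffle $O$ sends the card in position $j$ to position $2j\bmod(2n-1)$ (fixing $2n-1$), while the in shuffle $I$ sends $j$ to $2j+1\bmod(2n+1)$. The crucial preliminary observation is that the antipodal involution $c\colon j\mapsto 2n-1-j$, which partitions the $2n$ positions into the $n$ blocks $\{j,2n-1-j\}$, commutes with both $O$ and $I$; this is a short modular computation. Hence $Faro(2n)$ lies in the centralizer $C_{S_{2n}}(c)=\mathbb{Z}_2\wr S_n=:B_n$, of order $2^n n!$, where the $\mathbb{Z}_2$ in each block records whether the two positions of that block are interchanged. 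This already yields the upper bound $|Faro(2n)|\le 2^n n!$ and explains why every value in the theorem divides $2^n n!$.

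Next I would set up the standard homomorphisms out of $B_n$. Writing $\pi\colon B_n\to S_n$ for the action on the $n$ blocks (kernel the sign group $\mathbb{Z}_2^n$), the abelianization of $B_n$ is $\mathbb{Z}_2\times\mathbb{Z}_2$ for $n\ge2$, realized by the two characters $\varepsilon:=\operatorname{sgn}\circ\pi$ (parity of the block permutation) and $\sigma\colon(x,\tau)\mapsto\sum_i x_i$ (parity of the number of interchanged blocks). Their common kernel is precisely the derived subgroup $[B_n,B_n]=\{(x,\tau):\tau\in A_n,\ \sum_i x_i=0\}$, of index $4$. The strategy is therefore twofold: first show $Faro(2n)\supseteq[B_n,B_n]$, and then locate $Faro(2n)$ among the subgroups between $[B_n,B_n]$ and $B_n$ by evaluating $(\varepsilon,\sigma)$ on the generators. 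Given the containment, the index of $Faro(2n)$ in $B_n$ equals $4/\bigl|(\varepsilon,\sigma)\bigl(Faro(2n)\bigr)\bigr|$.

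The containment $Faro(2n)\supseteq[B_n,B_n]$ is the heart of the matter and the main obstacle. For the block part one must show $\pi(Faro(2n))\supseteq A_n$: transitivity on blocks is immediate from the doubling action, and the real work is to prove primitivity and then to exhibit a short word in $O,I$ (a commutator, or a product such as $O^{-1}I$) that acts on the blocks as a $3$-cycle, after which Jordan's theorem forces the block group to contain $A_n$. For the sign part one produces a single word acting trivially on blocks but interchanging exactly two blocks; conjugating it by the $A_n$ already obtained generates the whole sum-zero hyperplane, giving $Faro(2n)\cap\mathbb{Z}_2^n\supseteq\{\sum_i x_i=0\}$. This argument must fail exactly at the three exceptions, and verifying that these are the only failures is where one invokes the classification of the relevant primitive permutation groups: when $2n=2^k$ the block action is imprimitive (the binary bit-rotation-with-complement structure yields the solvable group of order $k2^k$); when $2n=12$ the block action on $6$ points is $PGL_2(5)\cong S_5$, primitive but not $S_6$; and when $2n=24$ it is the Mathieu group $M_{12}$ on $12$ points, which accounts for the factor $11$.

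Finally, with the containment secured in the generic case, I would simply compute $\varepsilon$ and $\sigma$ on $O$ and $I$ as functions of $n$. One finds that the image $(\varepsilon,\sigma)\bigl(Faro(2n)\bigr)\le\mathbb{Z}_2\times\mathbb{Z}_2$ has order $4$, $2$, $1$ according as $n\equiv2\pmod4$, $n$ is odd, or $n\equiv0\pmod4$, giving index $1,2,4$ and hence the orders $n!2^n$, $n!2^{n-1}$, $n!2^{n-2}$ respectively. (As a check, for $2n=10$ both $O$ and $I$ map to the same nonzero element of $\mathbb{Z}_2\times\mathbb{Z}_2$, yielding index $2$ and $|Faro(10)|=5!\,2^4$.) The three exceptional lines then follow by combining the corresponding exceptional block group with the same character computation. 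The two delicate points throughout are the production of the small-support element that launches Jordan's theorem, and the exact determination of which characters $\varepsilon,\sigma$ survive on the generators.
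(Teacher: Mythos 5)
The paper does not actually prove this theorem: it is imported verbatim from Diaconis--Graham--Kantor \cite{DGK}, and the only ingredient of the argument that the paper develops itself is the stay-stack property in Section~\ref{sec:faro}, which is exactly your opening observation that both shuffles commute with the antipodal involution $j\mapsto 2n-1-j$ and hence that $Faro(2n)$ embeds in the centralizer $B_n\cong\mathbb{Z}_2\wr S_n$ of order $2^n n!$. So there is no in-paper proof to compare against; what you have written is a reconstruction of the strategy of \cite{DGK}, and as a roadmap it is faithful: containment in $B_n$, the two characters $\varepsilon$ and $\sigma$ cutting out the index-$4$ commutator subgroup, the reduction of the generic cases to evaluating $(\varepsilon,\sigma)$ on the generators, and the identification of the exceptional block groups ($PGL_2(5)$ for $2n=12$, $M_{12}$ for $2n=24$, the solvable group for $2n=2^k$) all match the known structure and are numerically consistent with the stated orders.

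That said, as a proof your proposal has a genuine gap precisely where you flag one: the containment $Faro(2n)\supseteq[B_n,B_n]$ in the generic case. You reduce it to exhibiting a word in $O,I$ acting on blocks as a $3$-cycle (to invoke Jordan's theorem) and a word with small sign-support, but you do not produce either, nor do you prove primitivity of the block action; and the claim that ``this argument must fail exactly at the three exceptions'' is asserted rather than derived. In \cite{DGK} this is the bulk of the work and requires a careful case analysis together with nontrivial input on primitive groups containing elements of special cycle type. The final character computation of $(\varepsilon,\sigma)(O)$ and $(\varepsilon,\sigma)(I)$ as functions of $n\bmod 4$ is also left as ``one finds,'' though that part is routine. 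In short: correct architecture, correct answer, but the two load-bearing steps are named rather than carried out.
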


We note that the groups themselves were determined by Diaconis, Graham and Kantor.  For the exceptional cases of $2n=12$ and $24$ the corresponding groups are $S_5\rtimes (\mathbb{Z}_2)^6$ and $M_{12}\rtimes (\mathbb{Z}_2)^{11}$, respectively, where $S_5$ is the symmetric group on $5$ elements and $M_{12}$ is the Mathieu group.  When $2n=2^k$ then the group is $(\mathbb{Z}_2)^k\rtimes \mathbb{Z}_k$, this case is particularly useful for performing magic tricks (more on this later).  Finally, all remaining cases correspond to subgroups of index $1$, $2$ or $4$ of the Weyl group $B_n$ (formed by the $2^nn!$ signed permutation matrices of order $n$).

An important property of faro shuffling we will make use of is ``stay stack''.  This means that two cards which start symmetrically opposite across the center will continue to stay symmetrically opposite after performing an in or out faro shuffles.  This can be easily checked for $2n=10$ by examining Figure~\ref{fig:inout}.  In general, if we label the positions as $0,1,2,\ldots$ then an out shuffle sends a card in position $i<2n-1$ to position $2i\pmod{2n-1}$ and it keeps the card in position $2n-1$ at $2n-1$.  In particular the top and bottom cards still stay on the top and bottom (i.e., are still symmetrically opposite) and otherwise the cards at positions $j$ and $2n-1-j$ map to positions $2j\pmod{2n-1}$ and $2(2n-1-j)\equiv -2j\pmod{2n-1}$ which are still symmetrically opposite (i.e., their positions add to $2n-1$).  Finally we note that an in shuffle can be carried out by temporarily adding a top and bottom card, performing an out shuffle, and then removing the top and bottom card.  Therefore since the stay stack result is true for out shuffles it is also true for in shuffles.

\section{Flip shuffling}\label{sec:flip}
The methods of carrying out flip and faro shuffling are similar.  So the easiest way to begin to evaluate flip shuffling is to establish a link with faro shuffling.  This is the goal of this section.

\begin{theorem}\label{thm:bijection} Let $Flip(2n)$ denote the shuffle group generated using in and out flip shuffles on a deck of $2n$ cards.  Then $Flip(2n)=Faro(4n)$.
\end{theorem}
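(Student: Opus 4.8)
The plan is to realize both $Flip(2n)$ and $Faro(4n)$ as subgroups of the hyperoctahedral group $B_{2n}$ of signed permutations on $2n$ symbols, via one explicit ``folding'' dictionary, and then to check that under this dictionary the two generating sets coincide. On the flip side the containment is immediate: a configuration of a $2n$-card deck records a permutation of the $2n$ cards together with one face-up/face-down bit per card, which is exactly the data of a signed permutation, so $Flip(2n)\le B_{2n}$. The work is to embed $Faro(4n)$ into the \emph{same} $B_{2n}$ and to match the shuffles.

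The embedding of $Faro(4n)$ is where I would use stay stack. Let $\sigma$ be the central symmetry $\sigma(p)=4n-1-p$ of the $4n$ positions. The stay-stack property proved above says precisely that every element of $Faro(4n)$ commutes with $\sigma$; hence $Faro(4n)$ lies in the centralizer $C_{S_{4n}}(\sigma)$, and this centralizer---the permutations preserving the perfect matching $\{\{p,4n-1-p\}\}$---is isomorphic to $B_{2n}$. Concretely I would set up the dictionary $\Phi$ as follows: the $2n$ ``physical cards'' are the label-pairs $P_j=\{j,\,4n-1-j\}$ and the $2n$ ``slots'' are the position-pairs $S_s=\{s,\,4n-1-s\}$ for $0\le j,s\le 2n-1$; a stay-stack arrangement of the $4n$ deck is read as a $2n$-card-with-orientation deck by placing the physical card occupying $S_s$ into position $s$, declaring it face-down exactly when the smaller of its two labels sits at the upper position $s$. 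Then $\Phi$ is a bijection sending the $4n$ identity to the $2n$ identity, under which the $C_{S_{4n}}(\sigma)$-action on $4n$-decks matches the $B_{2n}$-action on oriented $2n$-decks, giving $C_{S_{4n}}(\sigma)\cong B_{2n}$.

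The key step, and the main obstacle, is verifying that $\Phi$ carries the out (resp.\ in) faro shuffle on $4n$ cards to the out (resp.\ in) flip shuffle on $2n$ cards. For this I would take the explicit position formulas for faro---the out shuffle sends position $i$ to $2i\bmod(4n-1)$ and the in shuffle sends $i$ to $2i+1\bmod(4n+1)$---and compute the induced action on the slots $S_s$ and on the orientation bit, tracking which half of the $4n$ deck each of the two labels of a pair lands in. The delicate bookkeeping is exactly that the ``flip'' of one half in the flip shuffle is reproduced automatically: a stay-stack pair straddling the cut has its two labels sent to opposite halves, which the interlacing reassembles into a single slot with a \emph{reversed} orientation, whereas a pair staying within one half keeps its orientation. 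One checks (as the reader may verify for $2n=10$ by comparing the out-faro image $0,10,1,11,\ldots,9,19$ against Figure~\ref{fig:flip1}, and the in-faro image against Figure~\ref{fig:flip3}) that the positional part reproduces the $0,\,2n{-}1,\,1,\,2n{-}2,\ldots$ interlacing of the flip shuffle while the sign part reproduces the alternating face-up pattern.

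Once both generators are matched, the conclusion is formal: inside $B_{2n}$ the generating set $\{\text{in, out faro}\}$ of $Faro(4n)$ equals the generating set $\{\text{in, out flip}\}$ of $Flip(2n)$, so the two generated subgroups are literally the same and $Flip(2n)=Faro(4n)$. I expect no difficulty in this group-theoretic wrap-up; essentially all of the content lies in pinning down $\Phi$ and grinding the two generator computations, with the orientation/interlacing bookkeeping for the in shuffle---where an extra pair of boundary cards must be accounted for, exactly as in the standard reduction of in-faro to out-faro used above---being the fussiest point.
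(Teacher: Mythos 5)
Your proposal is correct and is essentially the paper's own argument: the paper also uses stay stack to fold the $4n$-card faro deck onto an oriented $2n$-card deck (its bijection ``keep the first $2n$ cards, reading the overlines as face-up'' is exactly your dictionary $\Phi$ reading off the smaller position of each symmetric pair), and then checks that the out/in faro shuffles correspond to the out/in flip shuffles before concluding the generated groups coincide. Your packaging of this via the centralizer of the central involution inside $S_{4n}$ being isomorphic to $B_{2n}$ is a slightly more formal wrapper around the same bijection, not a different route.
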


\begin{proof}
First we give a bijection between a deck with $4n$ cards with the cards in stay stack and a deck with $2n$ cards where we keep track of whether a card is face-up or face-down. We will use ``$\overline{*}$'' to denote the card ``${*}$'' has been turned over.  We label our $4n$ cards now as $1,\overline{1},2,\overline{2},\ldots,2n,\overline{2n}$ and assume that the cards $i$ and $\overline{i}$ are symmetrically opposite across the center.  The bijection is now to take the deck of $4n$ cards and simply keep the first $2n$ cards, conversely given $2n$ cards with a face-up/face-down pattern we form our deck of $4n$ cards by first placing these in the first $2n$ positions and then in the second half placing cards to satisfy stay stack.

Examples of this bijection for $2n=10$ are as follows:
\[
\begin{array}{c@{\,\,}c@{\,\,}c@{\,\,}c@{\,\,}c@{\,\,}c@{\,\,}c@{\,\,}c@{\,\,}c@{\,\,}c@{\,\,}c@{\,\,}c@{\,\,}c@{\,\,}c@{\,\,}c@{\,\,}c@{\,\,}c@{\,\,}c@{\,\,}c@{\,\,}c@{~~\longleftrightarrow~~}c@{\,\,}c@{\,\,}c@{\,\,}c@{\,\,}c@{\,\,}c@{\,\,}c@{\,\,}c@{\,\,}c@{\,\,}c}
0&1&2&3&4&5&6&7&8&9&\overline{9}&\overline{8}&\overline{7}&\overline{6}&\overline{5}&\overline{4}&\overline{3}&\overline{2}&\overline{1}&\overline{0}&0&1&2&3&4&5&6&7&8&9\\
\overline{9}&0&\overline{8}&1&\overline{7}&2&\overline{6}&3&\overline{5}&4&\overline{4}&5&\overline{3}&6&\overline{2}&7&\overline{1}&8&\overline{0}&9&\overline{9}&0&\overline{8}&1&\overline{7}&2&\overline{6}&3&\overline{5}&4\\
\overline{4}&\overline{9}&5&0&\overline{3}&\overline{8}&6&1&\overline{2}&\overline{7}&7&2&\overline{1}&\overline{6}&8&3&\overline{0}&\overline{5}&9&4&\overline{4}&\overline{9}&5&0&\overline{3}&\overline{8}&6&1&\overline{2}&\overline{7}\\
7&\overline{4}&2&\overline{9}&\overline{1}&5&\overline{6}&0&8&\overline{3}&3&\overline{8}&\overline{0}&6&\overline{5}&1&9&\overline{2}&4&\overline{7}&7&\overline{4}&2&\overline{9}&\overline{1}&5&\overline{6}&0&8&\overline{3}\\
3&7&\overline{8}&\overline{4}&\overline{0}&2&6&\overline{9}&\overline{5}&\overline{1}&1&5&9&\overline{6}&\overline{2}&0&4&8&\overline{7}&\overline{3}&3&7&\overline{8}&\overline{4}&\overline{0}&2&6&\overline{9}&\overline{5}&\overline{1}
\end{array}
\]

This bijection commutes with the shuffling operations, i.e., performing a faro shuffle on a deck of $4n$ cards and then applying the bijection will produce the same arrangement as applying the bijection and performing the corresponding flip shuffle.  This can be seen in the sequence of above examples where we have performed in  shuffles on both sides.

In general we observe that when taking the deck with $4n$ cards and cutting it in half, the bottom half exactly corresponds to the action of taking the top half and turning it over (i.e., the order is reversed as well as the face-up/face-down status of each card).  When we now interlace these cards the first $2n$ cards are found by interlacing the first $n$ cards on the top half (which corresponds to the top half of the deck we want to flip shuffle) and the first $n$ cards on the bottom half (which corresponds to the bottom half of the deck we want to flip shuffle) and therefore the first $2n$ cards are exactly the same as we would expect.  Now using the property of stay stack the second $2n$ cards are as the bijection requires.

Since the bijection commutes with the shuffling operations, then for any deck arrangement that we can achieve by using faro shuffles on the $4n$ cards there is a unique corresponding arrangement on the deck using flip shuffles with $2n$ cards.  In particular the group actions are isomorphic and so the shuffling groups are the same.
\end{proof}

\section{Horseshoe shuffling}\label{sec:horseshoe}
Now we turn to the variation of flip shuffling where we no longer keep track of the face-up/face-down pattern of the cards.  This can be carried out as a flip shuffle on a special deck called a ``mirror deck'' which prints the numbers of the cards on both sides.  On a normal deck this is carried out by cutting the deck in half and reversing the order of the bottom half before interlacing.  As before we have two types of horseshoe shuffles, in and out, and they are both shown in Figure~\ref{fig:smale} for a deck with $10$ cards. 

\begin{figure}[hft]
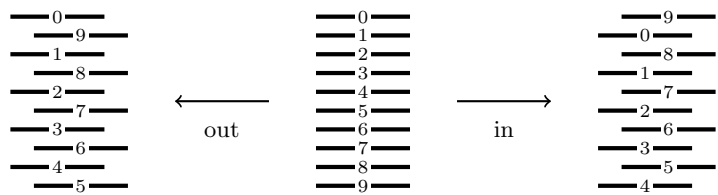

\centering
\picF
\caption{The effect of in and out horseshoe shuffles for a deck with 10 cards}
\label{fig:smale}
\end{figure}

The name ``horseshoe'' comes from being a discrete analog of the horseshoe map introduced by Stephen Smale (see \cite{smale}), which stretches out the unit square and then folds it back in onto itself.

As with flip shuffles, inverse horseshoe shuffles are easy to do by dealing two piles on the table.  For an inverse out horseshoe shuffle, deal two piles, turning the top card over and placing it onto the table in a first pile, dealing the second card straight down into a second pile and continuing, over (onto the first pile), down (onto the second pile), over, down, \dots. Finish by picking up the first pile, and turning it over onto the second pile.  This results in the original top card being back on top (see Figure~\ref{fig:invhorseout}). For an inverse in horseshoe shuffle deal into two piles but now deal down onto the first pile, over onto the second pile, down, over, down, over, \dots.  Finish by picking up the second pile, and turning it over onto the first pile.  This results in the original top card going to the bottom (see Figure~\ref{fig:invhorsein}).   These are easy for most performers (and spectators, see our card trick below) to perform.

\begin{figure}[hft]
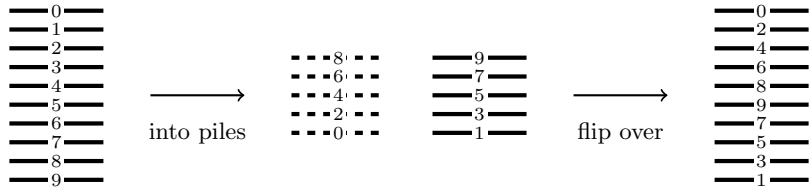

\centering
\picAAA
\caption{An inverse out horseshoe shuffle with $10$ cards}
\label{fig:invhorseout}
\end{figure}

\begin{figure}[hft]
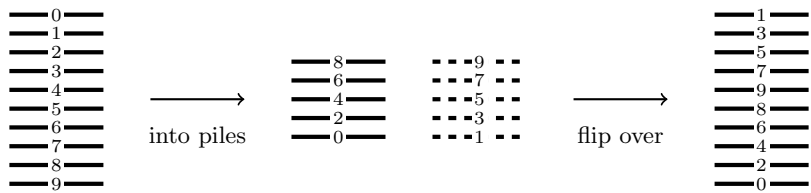

\centering
\picBBB
\caption{An inverse in horseshoe shuffle with $10$ cards}
\label{fig:invhorsein}
\end{figure}

Since horseshoe shuffling and flip shuffling differ by whether or not we keep track of orientation we have that the corresponding groups for the horseshoe shuffle are subgroups of those for the flip shuffle.  As we will show, the groups are among the simplest and most beautiful in the theory of perfect shuffling.

\begin{theorem}\label{thm:horseshoe}
Let $Horse(2n)$ denote the shuffle group generated using in and out horseshoe shuffles on a deck of $2n$ cards.  Then the following holds:
\[
|Horse(2n)|=\left\{
\begin{array}{r@{\quad}l}
4{\cdot}5{\cdot}6&\text{if $2n=6$,}\\
8{\cdot}9{\cdot}10{\cdot}11{\cdot}12&\text{if $2n=12$,}\\
(k+1)2^{k}&\text{if $2n=2^k$ with $k\ge 2$,}\\
(2n)!&\text{if $n\equiv1\pmod2$ and $n\ne 3$,}\\
(2n)!/2&\text{if $n\equiv0\pmod2$ and $n\ne6,2^k$.}
\end{array}\right.
\]
(Note $|Horse(2n)|$ is the number of possible arrangments of a deck of $2n$ cards.)
\end{theorem}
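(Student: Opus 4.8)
The plan is to identify $Horse(2n)$ with the image of $Flip(2n)=Faro(4n)$ (Theorem~\ref{thm:bijection}) under the map that forgets card orientation, and then to read off its order from the structure of $Faro(4n)$ supplied by Theorem~\ref{thm:faro} and its accompanying description.

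\emph{Setting up the projection.} Via the stay-stack bijection of Theorem~\ref{thm:bijection}, a flip configuration on $2n$ cards is a stay-stack configuration of the $4n$ cards $1,\bar1,\dots,2n,\overline{2n}$ in which $i$ and $\bar i$ sit in mirror positions. Because faro shuffles preserve stay stack, $Faro(4n)$ permutes the $2n$ conjugate pairs $\{i,\bar i\}$ and may interchange the two members of a pair; recording the permutation of pairs together with these interchanges embeds $Faro(4n)\hookrightarrow B_{2n}=(\mathbb{Z}_2)^{2n}\rtimes S_{2n}$, the signed permutations of the $2n$ pairs. Passing to a mirror deck is exactly applying the projection $\pi\colon B_{2n}\to S_{2n}$ that discards the signs, and one checks directly from the interlacing rule that $\pi$ sends the in and out flip generators to the in and out horseshoe generators. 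Hence $Horse(2n)=\pi(Faro(4n))\le S_{2n}$, and with $N:=Faro(4n)\cap\ker\pi$ (the orientation-only moves, i.e.\ those fixing every pair setwise) we get $|Horse(2n)|=|Faro(4n)|/|N|$.

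\emph{Cases with known structure.} Here the faro parameter is $m=2n$ for a deck of $4n=2m$ cards, and $N\le(\mathbb{Z}_2)^{2n}$. When Theorem~\ref{thm:faro} and its structural remarks pin down $Faro(4n)$, the projection is immediate. For $2n=6$ we have $Faro(12)=S_5\rtimes(\mathbb{Z}_2)^6$ with $(\mathbb{Z}_2)^6$ the full sign group, so the image of $\pi$ is $S_5\le S_6$ of order $4\cdot5\cdot6$; for $2n=12$ we have $Faro(24)=M_{12}\rtimes(\mathbb{Z}_2)^{11}$, whose image is $M_{12}\le S_{12}$ of order $8\cdot9\cdot10\cdot11\cdot12$; for $2n=2^k$ a short check in the explicit bit-model shows the only pair-fixing elements are the identity and the global complement $p\mapsto\bar p$, so $|N|=2$ and $|Horse(2n)|=(k+1)2^{k+1}/2=(k+1)2^k$. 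Finally, for odd $n$ (generic) $|Faro(4n)|=(2n)!\,2^{2n}=|B_{2n}|$ forces $Faro(4n)=B_{2n}$, hence $\pi$ is onto and $Horse(2n)=S_{2n}$ of order $(2n)!$.

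\emph{The generic even case and the main obstacle.} For even $n$ with $2n\ne6,12$ and $2n\ne2^k$, the group $Faro(4n)$ is an index-$4$ subgroup of $B_{2n}$ of order $(2n)!\,2^{2n-2}$, and identifying $\pi(Faro(4n))$ is the crux. Since $|N|\le2^{2n}$, the image has order at least $(2n)!/4$, so its index in $S_{2n}$ is at most $4$; as $S_{2n}$ (with $2n\ge5$) has no proper subgroup of index less than $2n$ other than $A_{2n}$, the image is either $S_{2n}$ or $A_{2n}$. To force $A_{2n}$ I would compute the parity of the two horseshoe generators. In the coordinate $v=i+1$, the out- and in-horseshoe permutations are precisely the doubling maps $[x]\mapsto[2x]$ on the $\pm$-classes of $\mathbb{Z}/(4n-1)$ and of $\mathbb{Z}/(4n+1)$ respectively. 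A Zolotarev-type analysis of these folded doublings (tracking around each cycle whether $2^{\ell}\equiv+1$ or $-1$, together with the Jacobi symbol $(2/(4n\pm1))$) shows that the out-shuffle is \emph{always} an even permutation while the in-shuffle is even exactly when $n$ is even. This parity statement, which I expect to be the technical heart of the argument, places the generic even group inside $A_{2n}$ and hence pins it down as $A_{2n}$, of order $(2n)!/2$; for odd $n$ it simultaneously confirms (via the odd in-shuffle) that the group escapes $A_{2n}$, consistent with the value $(2n)!$ obtained above. This exhausts all cases.
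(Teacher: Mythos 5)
Your proposal reaches all the right conclusions, but by a genuinely different route from the paper's. The paper never explicitly forms the quotient $\pi\colon B_{2n}\to S_{2n}$; it argues by counting. For odd $n$ it notes that $|Flip(2n)|=(2n)!\,2^{2n}$ already saturates the total number of (ordering, orientation) pairs, so every ordering occurs---essentially your surjectivity observation. For the generic even case it does not invoke the minimal index of subgroups of $S_{2n}$ or the simplicity of $A_{2n}$: it shows by a direct transposition count that both generators are even permutations, shows separately that the parity of the number of face-up cards is invariant, and concludes that the at most $\bigl((2n)!/2\bigr)\cdot 2^{2n-1}=(2n)!\,2^{2n-2}=|Flip(2n)|$ reachable configurations must all occur. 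Your index-at-most-$4$ argument gets the same conclusion from the generator parity alone, at the price of quoting structural facts about $S_{2n}$ and $A_{2n}$; the paper's version is more elementary but needs the extra face-up-parity invariant. For $2n=2^k$ the paper gives a self-contained two-sided argument (an explicit $(k+1)2^k$ lower bound by moving cards into the top two positions, and a preserved ``special ordering'' for the upper bound) rather than your computation of the pair-stabilizing kernel inside $(\mathbb{Z}_2)^{k+1}\rtimes\mathbb{Z}_{k+1}$; your kernel computation is correct and shorter, but it, like your treatment of $2n=6$ and $2n=12$ (which the paper settles by computer), leans on the Diaconis--Graham--Kantor structure theory. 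Finally, the one step you defer---the parity of the two horseshoe generators---is true exactly as you state it (out always even, in even iff $n$ is even), but it is far from being the technical heart: the out shuffle has $n(n-1)$ inversions and the in shuffle differs from it by $n$ disjoint transpositions, so no Zolotarev-type analysis is needed.
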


The special cases of $2n=6$ and $2n=12$ were verified by computer, and we will not discuss them further here.  We will however remark that these two cases are somewhat extraordinary and that the corresponding groups are known.  When $2n=6$ then the first \emph{three} cards can be arbitrary which then forces the order of the  last three cards, the corresponding group is $PGL(2,5)$ which is isomorphic to $S_5$.  When $2n=12$ then the first \emph{five} can be arbitrary which then forces the order of the last seven cards, the corresponding group is again the Mathieu group $M_{12}$ (which is sharply $5$-transitive).

We will establish Theorem~\ref{thm:horseshoe} by working through the remaining general cases.  The main thing to observe is that if we can achieve an ordering of the cards using flip shuffles then we also can achieve the same ordering of the cards using horseshoe shuffles (i.e., the difference between the two being whether or not we keep track of the orientation of the cards).

\begin{lemma}
If $n\equiv1\pmod{2}$ and $n\ne 3$ then $Horse(2n)=S_{2n}$, the symmetric group of order $2n$.  In particular, $|Horse(2n)|=(2n)!$.
\end{lemma}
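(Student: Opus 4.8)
The plan is to leverage Theorems~\ref{thm:bijection} and~\ref{thm:faro} to pin down $Flip(2n)$ exactly, and then realize $Horse(2n)$ as the image of $Flip(2n)$ under the map that forgets orientation.

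First I would identify the flip group. By Theorem~\ref{thm:bijection}, $Flip(2n)=Faro(4n)$. Writing $4n=2N$ with $N=2n$, the hypothesis that $n$ is odd gives $N\equiv 2\pmod 4$, and the excluded values $2N=4n\neq 12$ together with $4n\neq 4$ correspond exactly to $n\neq 3$ (and the degenerate $n\neq 1$, a two-card deck, which I would dispose of by direct inspection). Thus the relevant line of Theorem~\ref{thm:faro} applies, giving $|Faro(4n)|=N!\,2^{N}=(2n)!\,2^{2n}$. Since this equals the order of the full Weyl group $B_{N}$ and the theorem records that this case is the index-$1$ subgroup, we conclude $Flip(2n)=Faro(4n)=B_{2n}$, the group of all signed permutations of $2n$ symbols.

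Next I would set up the forgetful map. Under the stay-stack bijection of Theorem~\ref{thm:bijection}, the $4n$ cards are grouped into $2n$ center-symmetric pairs $\{i,\overline{i}\}$; a faro shuffle permutes these $2n$ pairs and may flip them, and this is precisely the action of $B_{2n}$, namely the underlying permutation of the $2n$ pairs together with a sign on each pair recording whether it has been flipped. In the flip picture this sign is the face-up/face-down orientation of the corresponding card, so horseshoe shuffling---which discards orientation---is exactly the composition of the flip action with the standard projection $\pi\colon B_{2n}\to S_{2n}$ that forgets signs. Because the out and in horseshoe generators are the $\pi$-images of the out and in flip generators, $\pi$ restricts to a surjection $Flip(2n)\to Horse(2n)$, so $Horse(2n)=\pi\bigl(Flip(2n)\bigr)$.

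Finally, since $Flip(2n)=B_{2n}=(\mathbb{Z}_2)^{2n}\rtimes S_{2n}$ and $\pi$ is the canonical projection killing the $(\mathbb{Z}_2)^{2n}$ factor, we obtain $Horse(2n)=\pi(B_{2n})=S_{2n}$, hence $|Horse(2n)|=(2n)!$. The only genuine subtlety---and the step I expect to demand the most care---is the justification that forgetting orientation coincides with forgetting the sign, i.e.\ that the horseshoe generators are truly the $\pi$-images of the flip generators and that $Horse(2n)$ therefore \emph{equals} the image rather than merely containing it; once this is nailed down, everything else is bookkeeping with the faro formula and the semidirect-product structure of $B_{2n}$.
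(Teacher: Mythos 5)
Your proposal is correct and takes essentially the same route as the paper: both compute $|Flip(2n)|=|Faro(4n)|=(2n)!\,2^{2n}$ from Theorems~\ref{thm:bijection} and~\ref{thm:faro}, note that this is the maximum possible number of (ordering, orientation) configurations so that every one is achieved, and conclude that every ordering of $S_{2n}$ is reachable by horseshoe shuffles. The paper compresses the last step into a one-line counting remark, whereas you make the sign-forgetting projection $B_{2n}\to S_{2n}$ explicit, but the substance is identical.
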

\begin{proof}
By Theorem~\ref{thm:bijection} we have that $|Flip(2n)|=|Faro(4n)|=(2n)!2^{2n}$.  On the other hand there are only $(2n)!$ ways to order the deck and $2^{2n}$ face-up/face-down configurations so that the total number of possible orderings under flip shuffling is $(2n)!2^{2n}$ showing that each combination must occur.  In particular, we must be able to get any ordering using horseshoe shuffles.
\end{proof}

\begin{lemma}
If $n\equiv 0\pmod{2}$ and $n\ne 6,2^k$ then $Horse(2n)=A_{2n}$, the alternating group of order $2n$.  In particular,  $|Horse(2n)|=(2n)!/2$.
\end{lemma}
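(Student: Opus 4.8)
The plan is to realize $Horse(2n)$ as the image of $Flip(2n)$ under the homomorphism $\pi$ that forgets the orientation of each card, and then to trap its order between $(2n)!/2$ from below and above. Throughout I regard $Flip(2n)$ as a group of signed permutations of the $2n$ card positions (the sign recording face-up/face-down), so that $\pi\colon Flip(2n)\to S_{2n}$ is the restriction of the natural projection $B_{2n}\to S_{2n}$ and, as noted before Theorem~\ref{thm:horseshoe}, $Horse(2n)=\pi\bigl(Flip(2n)\bigr)$. Writing $K=\ker\bigl(\pi|_{Flip(2n)}\bigr)$ for the subgroup of pure orientation changes, the first isomorphism theorem gives $|Horse(2n)|=|Flip(2n)|/|K|$. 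Since $n$ is even and $n\neq 6,2^k$, the deck size $4n$ avoids the values $24$ and $2^k$, so Theorem~\ref{thm:bijection} together with the $N\equiv 0\pmod 4$ case of Theorem~\ref{thm:faro} (applied with $N=2n$) yields $|Flip(2n)|=|Faro(4n)|=(2n)!\,2^{2n-2}$.

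First I would bound $|K|$ from above. A single flip shuffle turns over exactly one half of the deck, hence changes the orientation of exactly $n$ cards; as $n$ is even, each generator flips an even number of cards. The parity of the total number of orientation changes is a homomorphism $\tau\colon B_{2n}\to\mathbb{Z}/2$ (it is the product of all the signs, which multiplies correctly under the wreath-product law), so $\tau$ is trivial on the generators and therefore on all of $Flip(2n)$. In particular every element of $K$, being a product of in-place flips over some set $S$ of cards, must have $|S|$ even. Thus $K$ is contained in the index-two even-weight subgroup of $(\mathbb{Z}/2)^{2n}$, giving $|K|\le 2^{2n-1}$ and hence $|Horse(2n)|\ge (2n)!\,2^{2n-2}/2^{2n-1}=(2n)!/2$.

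Next I would show that both horseshoe generators lie in $A_{2n}$, so that $Horse(2n)\subseteq A_{2n}$ and $|Horse(2n)|\le (2n)!/2$. Each horseshoe shuffle factors as $h=f\circ r$, where $r$ reverses the bottom half of $n$ cards and $f$ is the corresponding (in or out) faro shuffle. The reversal $r$ has sign $(-1)^{\binom n2}=(-1)^{n(n-1)/2}$. For the faro factor I would invoke Zolotarev's lemma: the out faro is multiplication by $2$ on $\mathbb{Z}/(2n-1)$ together with one fixed point, so its sign is the Jacobi symbol $\bigl(\tfrac{2}{2n-1}\bigr)=(-1)^{n(n-1)/2}$, while the in faro is multiplication by $2$ on the nonzero residues mod $2n+1$, of sign $\bigl(\tfrac{2}{2n+1}\bigr)=(-1)^{n(n+1)/2}$. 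Multiplying, the out horseshoe has sign $(-1)^{n(n-1)}=+1$ for every $n$, while the in horseshoe has sign $(-1)^{n(n-1)/2+n(n+1)/2}=(-1)^{n^2}=(-1)^n$, which equals $+1$ precisely because $n$ is even. Hence both generators are even.

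Combining the two bounds gives $|Horse(2n)|=(2n)!/2$; since $Horse(2n)\subseteq A_{2n}$ and $A_{2n}$ is the unique index-two subgroup of $S_{2n}$, we conclude $Horse(2n)=A_{2n}$. I expect the parity computation of the previous paragraph to be the crux: the lower bound is essentially forced once one observes that flipping exactly $n$ (even) cards per shuffle constrains the kernel, whereas pinning down the image requires the Zolotarev evaluation and careful bookkeeping of the in versus out cases. It is exactly here that the hypothesis that $n$ is even is indispensable—the in horseshoe is an odd permutation when $n$ is odd, and it is this parity that lets $Horse(2n)$ fill out all of $S_{2n}$ in the companion lemma.
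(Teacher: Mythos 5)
Your proof is correct, and while it follows the same overall skeleton as the paper's --- compute $|Flip(2n)|=|Faro(4n)|=(2n)!\,2^{2n-2}$, use the parity of the number of face-up cards to force a lower bound of $(2n)!/2$, and show both generators are even to force the matching upper bound --- you execute the two key steps differently. For the lower bound, the paper counts configurations directly: at most $(2n)!/2$ orderings times at most $2^{2n-1}$ orientation patterns equals exactly $|Flip(2n)|$, so every even permutation must occur as an ordering; you instead package the same parity observation as a bound $|K|\le 2^{2n-1}$ on the kernel of the forgetful map $\pi\colon Flip(2n)\to Horse(2n)$ and invoke the first isomorphism theorem. These are logically equivalent, but your version has the small advantage of not needing the upper bound on the image before concluding the lower bound. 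For the evenness of the generators, the paper gives an elementary, self-contained count of transpositions ($n(n-1)$ of them for the out horseshoe, plus $n$ more pair swaps for the in horseshoe), whereas you factor each horseshoe shuffle as a half-deck reversal of sign $(-1)^{n(n-1)/2}$ composed with a faro shuffle whose sign you evaluate by Zolotarev's lemma as the Jacobi symbol $\left(\tfrac{2}{2n\mp1}\right)$. Your route is slicker and makes transparent exactly where the hypothesis that $n$ is even enters (the in horseshoe has sign $(-1)^n$), at the cost of importing quadratic-residue machinery the paper avoids; both computations check out.
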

\begin{proof}
By Theorem~\ref{thm:bijection} we have that $|Flip(2n)|=|Faro(4n)|=(2n)!2^{2n-2}$.  We now take a closer look at what orderings are possible and what face-up/face-down configurations are possible.

For the orderings we will think of our shuffling operations as applying a permutation to $0,1,\ldots,2n-1$.  We claim that the two different shuffling operations both  correspond to even permutations (i.e., can be written as the product of an even number of transpositions).  If this is the case then it will follow that all possible arrangements that we can form correspond to even permutations and in particular there are at most $(2n)!/2$ possible orderings.  

To verify the claim we start with the cards labeled $0,1,\ldots,n,n+1,\ldots,2n-1$ and we want to take it to the ordering $0,2n-1,1,\ldots,n+1,n-1,n$.  The cards $n-1$ and $n$ are in the correct relative ordering.  We now move the card $n+1$ so it comes before $n-1$ which takes two transpositions (i.e., exchange $n$ and $n+1$ and then exchange $n-1$ and $n+1$).  We now move the card $n+2$ so it comes before $n-2$ which takes four transpositions (i.e., we exchange places with $n,n-1,n+1,n-2$).  This continues in general so that we will move card $n+i$ so it comes before $n-i$ which will take $2i$ transpositions (i.e., we exchange places with $n,n-1,\ldots,n+(i-1),n-i$).  Therefore the total number of transpositions we have used is 
\[
0+2+4+\cdots+2(n-1)=n(n+1),
\]
which is even and this is an even permutation.  For the in shuffle we first place it into the out shuffle and then switch every pair, i.e., apply the transpositions $(0,2n-1)$, $(1,2n-2)$, \ldots, $(n-1,n)$, and since by hypothesis $n$ is even there are an even number of such pairs showing that the in shuffle will also be an even permutation.

As for the face-up/face-down configuration let $X_i$ be an indicator that the card in the $i$th position is face-up (i.e., $X_i$ is $1$ if the card is face-up and $0$ if the card is face-down).  Therefore the number of cards which are face-up is $\sum X_i$.  Applying either shuffle we have that the number of cards which are face-up after one operation is
\begin{multline*}
X_0+X_1+\cdots+X_{n-1}+(1-X_n)+(1-X_{n+1})+\cdots+(1-X_{2n-1})
\\ \equiv X_0+X_1+\cdots+X_{2n-1}+n\pmod2.
\end{multline*}
By hypothesis we have that $n$ is even and therefore the parity of the number of face-up cards never changes with shuffling.  Since we start with an even number of face-up cards we will always have an even number of face-up cards and therefore there are at most $2^{2n-1}$ possible face-up/face-down configurations.

Combining the two above ideas we see that there are at most $(2n)!2^{2n-2}$ possible configurations under flip shuffling, but we already know that we have that many achievable configurations.  And therefore each possible combination of ordering (coming from even permutations) and face-up/face-down cards (as long as an even number of face-up cards) is possible.  In particular, we can achieve any ordering for horseshoe shuffling which corresponds to an even permutation.
\end{proof}

All that remains now to establish Theorem~\ref{thm:horseshoe} is the case when the deck size is a power of $2$ which we will handle in the next section.

\section{Horseshoe shuffles for deck sizes of powers of $2$}\label{sec:poweroftwo}
Combining Theorem~\ref{thm:bijection} and Theorem~\ref{thm:faro} we can conclude that when $2n=2^k$ that $|Flip(2n)|=(k+1)2^{k+1}$.  Of course this takes more information into account and so we will work to understand what is possible when we ignore the face-up/face-down orientation of the cards.  In this section we will determine this, and moreover we will show that knowing just the first two cards we can determine the order of the remaining cards.

The first observation is that by using binary to represent the position of a card we can determine where the card will go when we horseshoe shuffle.  There are two types of shuffles and on each of the two shuffles we have a top half and a bottom half.  If we represent the location using binary as $x_{k-1}x_{k-2}\ldots x_0$ (so the top half is $x_{k-1}=0$ and the bottom half is $x_{k-1}=1$) then the following rules can be verified (note by flipping the bits in the second half we reverse their order):
\begin{align*}
\text{\emph{in} shuffle:}&\quad x_{k-1}x_{k-2}\ldots x_1x_0\longrightarrow\left\{
\begin{array}{r@{\quad}l}
x_{k-2}\ldots x_0x_{k-1}&\text{if }x_{k-1}=0\\[5pt]
\overline{x_{k-2}\ldots x_0x_{k-1}}&\text{if }x_{k-1}=1
\end{array}\right.\\[10pt]
\text{\emph{out} shuffle:}&\quad x_{k-1}x_{k-2}\ldots x_1x_0\longrightarrow\left\{
\begin{array}{r@{\quad}l}
x_{k-2}\ldots x_0x_{k-1}&\text{if }x_{k-1}=0\\[5pt]
\overline{x_{k-2}\ldots x_0}x_{k-1}&\text{if }x_{k-1}=1
\end{array}\right.
\end{align*}
Where we let $\overline{x_i}=1-x_i$, i.e., flipping the bit.

\begin{proposition}
For any $0\le i,j\le 2^k-1$ we can use $k$ horseshoe shuffles on a deck of size $2^k$ to move the card in the $i$th position to the $j$th position.  In particular, every card can be moved to the top in at most $k$ shuffles.
\end{proposition}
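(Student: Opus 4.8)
The plan is to turn the two displayed binary rules into a single linear map over $\mathbb{F}_2$ and then solve a one-line linear system. First I would check that both shuffles can be written in the common form
\[
(x_{k-1},x_{k-2},\ldots,x_0)\longmapsto\bigl(x_{k-2}\oplus x_{k-1},\,x_{k-3}\oplus x_{k-1},\,\ldots,\,x_0\oplus x_{k-1},\,x_{k-1}\oplus s\bigr),
\]
where $s=0$ records an out shuffle and $s=1$ an in shuffle. The structural facts I want to read off the rules are: (i) the top $k-1$ output bits are identical for the two shuffles (each is $x_{i-1}\oplus x_{k-1}$), so they do not depend on the in/out choice; and (ii) the two shuffles differ \emph{only} in the bottom output bit, which is $x_{k-1}$ for out and $\overline{x_{k-1}}$ for in. Consequently the map is honestly $\mathbb{F}_2$-linear in $x$ --- the conditional reversal is just ``XOR with $x_{k-1}$'', with no constant term --- and the choice of shuffle enters purely as an additive control bit $s$ sitting in coordinate $0$.

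Writing the linear part as a matrix $A$ and the control direction as the standard basis vector $e_0$, one shuffle acts by $x\mapsto Ax\oplus e_0 s$, so after $k$ shuffles with choices $s_1,\ldots,s_k$ the tracked card sits at position
\[
A^k i\;\oplus\;\bigoplus_{t=1}^{k}A^{k-t}e_0\,s_t .
\]
The point is that $A e_i=e_{i+1}$ for $0\le i\le k-2$ (reading the map on a single basis vector), hence $A^{k-t}e_0=e_{k-t}$ and the controlled term $\bigoplus_{t}s_t e_{k-t}$ ranges over \emph{all} of $\mathbb{F}_2^k$ as $(s_1,\ldots,s_k)$ varies. Therefore the final position is $A^k i$ plus a freely chosen vector, so it can be made equal to any target $j$; concretely one reads the bits of $j\oplus A^k i$ (in reverse) to get the required in/out sequence, giving the result in \emph{exactly} $k$ shuffles. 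Taking $j=0$ yields the ``move to the top in at most $k$ shuffles'' statement.

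The only genuinely delicate step --- and the one I would write out carefully --- is justifying (i) and (ii) above, i.e.\ the faithful passage from the physical shuffle to the displayed bit rules. I would do this from the tent-map description of the shuffle: an out shuffle sends a position $p<2^{k-1}$ to $2p$ and $p\ge 2^{k-1}$ to $2^{k+1}-1-2p$, while an in shuffle sends them to $2p+1$ and $2^{k+1}-2-2p$ respectively; rewriting each in binary produces exactly the XOR form above, and the two formulas visibly differ only in the last bit. Once this translation is pinned down the linear-algebra argument is immediate and, pleasingly, constructive: the winning sequence of in and out shuffles is literally the binary expansion of $j\oplus A^k i$.
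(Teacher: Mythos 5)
Your proof is correct, and it reaches the result by a different and more explicit route than the paper. The paper's argument is an injectivity-plus-counting one: it observes that whether the tracked card sits in the top or bottom half at each of the $k$ steps is forced by the starting position alone (independent of the in/out choices), that the $2^k$ possible in/out sequences therefore produce $2^k$ pairwise distinct final positions because each step is reversible, and then concludes by pigeonhole that every position is hit. You instead linearize: writing both shuffles as the single affine map $x\mapsto Ax\oplus e_0 s$ over $\mathbb{F}_2$, you compute the position after $k$ steps as $A^k i\oplus\bigoplus_{t} s_t e_{k-t}$ and note that the control term sweeps out all of $\mathbb{F}_2^k$. The underlying mechanism is the same in both arguments --- the in/out choice at step $t$ independently toggles bit $k-t$ of the final position --- but your packaging buys surjectivity directly, with no appeal to distinctness or pigeonhole, and is constructive: the required shuffle sequence is read off from the bits of $j\oplus A^k i$, which the paper's existence argument does not supply. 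One further point in your favor: your insistence on re-deriving the bit rules from the tent-map description is not mere caution, because the paper's displayed rule for the in shuffle on the top half ($x_{k-1}=0$), as printed, sends $p$ to $2p$ rather than $2p+1$ (its last output bit should read $\overline{x_{k-1}}$); your version, with last output bit $x_{k-1}\oplus s$, is the one consistent with Figures~\ref{fig:smale} and~\ref{fig:ksteps}, and it is precisely the facts that the two shuffles agree in the top $k-1$ output bits and differ by the constant vector $e_0$ in the last bit that make your linear-algebra step go through.
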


This can be seen by noting that through the first $k$ shuffles the location of the card in the top or bottom half is completely determined by where the card initially started.  In particular we will consistently either apply the rule for the top or bottom half as we move through the $k$ shuffles and in each there are $2$ outcomes, making $2^k$ outcomes after $k$ shuffles.  Since we can always reverse the process at each step (i.e., we know which half of the deck we were in at the previous step and the last digit indicated whether we were an in or out shuffle) the outcomes are distinct.  But there are only $2^k$ possible locations and so in $k$ shuffles we can move a card to any position.

An illustration of this is shown in Figure~\ref{fig:ksteps} which demonstrates what goes on if we look at the position for the card initially in position $1011$ through four shuffles where an \emph{in} shuffle is up and an \emph{out} shuffle is down as we go from left to right.

\begin{figure}[hft]
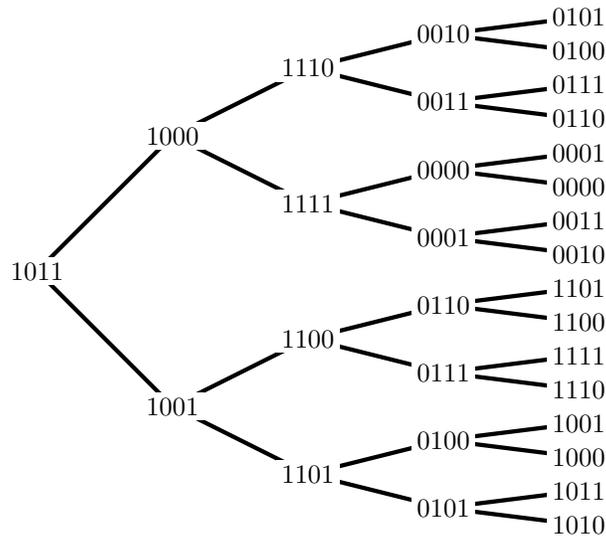

\centering
\picBB
\caption{The position of the card initially at $1011$ through four shuffles}
\label{fig:ksteps}
\end{figure}

With this we can now get a lower bound for the number of arrangements.

\begin{proposition}
There are at least $(k+1)2^k$ different arrangements of a deck of size $2^k$ using in and out horseshoe shuffles.
\end{proposition}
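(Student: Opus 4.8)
The plan is to read the induced action on positions off the binary rules and recognize it as an affine action, then exhibit $(k+1)2^{k}$ distinct elements of the resulting permutation group; since distinct permutations of the positions give distinct deck orderings, that many arrangements follow. Identify each position with its length-$k$ binary string, viewed as a vector in $\mathbb{F}_2^{k}$. A short check of the stated rules shows that the out shuffle acts by a fixed $\mathbb{F}_2$-linear map $O$ (it fixes the all-zeros position and is additive), and that the in shuffle agrees with the out shuffle in every coordinate except the last; hence $I(x)=O(x)\oplus e_0$ with $e_0=(0,\dots,0,1)$. In particular both shuffles lie in $\mathrm{AGL}(k,2)$, they share the common linear part $O$, and they differ by the translation by $e_0$.

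First I would extract the translations. Writing $\tau_v(x)=x\oplus v$, we have $I=\tau_{e_0}\circ O$, so $I O^{-1}=\tau_{e_0}$ is a translation lying in the group. Conjugation by $O$ sends translations to translations, $O\,\tau_v\,O^{-1}=\tau_{O(v)}$, so the group contains $\tau_{O^{j}(e_0)}$ for every $j$. A direct computation shows that on vectors with leading bit $0$ the map $O$ is simply the coordinate shift, so $O^{j}(e_0)=e_j$ for $0\le j\le k-1$. Thus the group contains the translations by $e_0,e_1,\dots,e_{k-1}$, a basis of $\mathbb{F}_2^{k}$, and therefore all $2^{k}$ translations $\tau_v$.

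Next I would pin down the order of $O$ by continuing the same orbit. One checks that $O^{k-1}(e_0)=e_{k-1}$ (now with leading bit $1$), then $O^{k}(e_0)=\mathbf 1$ (the all-ones vector), and finally $O^{k+1}(e_0)=e_0$; so $e_0$ lies on a cycle of length exactly $k+1$ under $O$, whence $O^{0},\dots,O^{k}$ are pairwise distinct and $\langle O\rangle$ has order at least $k+1$. Finally, every nonidentity translation moves the zero vector whereas every power of $O$ fixes it, so the translation subgroup meets $\langle O\rangle$ trivially. Consequently the maps $x\mapsto O^{j}(x)\oplus v$ with $v\in\mathbb{F}_2^{k}$ and $0\le j\le k$ are $(k+1)2^{k}$ distinct elements of the shuffle group, giving at least that many arrangements.

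I expect the only genuine work to be the two orbit computations: verifying $O^{j}(e_0)=e_j$ to obtain a spanning set of translations, and verifying the wrap-around $O^{k}(e_0)=\mathbf 1$, $O^{k+1}(e_0)=e_0$ to obtain the cyclic factor of size $k+1$. Everything else is routine semidirect-product bookkeeping. The one point to get right is that both facts come from a single orbit of $e_0$: its first $k$ steps hand us a basis (hence all translations), and its full period $k+1$ hands us the rotational factor, and these two contributions multiply precisely because the linear part and the translation part of an affine map are independent.
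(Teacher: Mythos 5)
Your argument is correct, but it takes a genuinely different and more structural route than the paper's. The paper leans on the preceding proposition: any of the $2^k$ cards can be brought to the top in $k$ shuffles, and once a card is on top, repeated out shuffles fix it while cycling the occupant of the second position through the $k+1$ positions $1\to2\to4\to\cdots\to2^{k-1}\to(2^k-1)\to1$; counting achievable pairs (top card, second card) gives $(k+1)2^k$. You instead work inside $\mathrm{AGL}(k,2)$, extracting the full translation subgroup of order $2^k$ from $IO^{-1}=\tau_{e_0}$ and its $O$-conjugates, and getting the factor $k+1$ from the order of the linear part $O$. The computational core is identical: your $O$-orbit $e_0,e_1,\dots,e_{k-1},\mathbf 1$ is literally the paper's cycle $1,2,4,\dots,2^{k-1},2^k-1$ of second-position occupants. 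What your version buys is independence from the previous proposition and, more importantly, an explicit subgroup $\{\tau_vO^j\}\cong(\mathbb{Z}_2)^k\rtimes\langle O\rangle$ of order $(k+1)2^k$; since that set is closed under composition and contains both generators $O=\tau_0O$ and $I=\tau_{e_0}O$, it in fact \emph{equals} $Horse(2^k)$, which simultaneously reproves the upper bound of the following argument and answers the group-identification question raised in the paper's concluding remarks. Two small cautions. First, the linearity of $O$ is not immediate from its piecewise definition and deserves to be written out: with $\sigma$ the cyclic left shift one has $O(x)=\sigma(x)\oplus x_{k-1}\cdot(1,\dots,1,0)$, which is visibly linear. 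Second, the paper's displayed binary rule for the in shuffle contains a typo in the top-half case (the last bit should be $\overline{x_{k-1}}$, not $x_{k-1}$, else top and bottom halves would both land on even positions); your identity $I=\tau_{e_0}\circ O$ is the correct one for the shuffle as actually defined, so you have silently corrected this, but you should not cite the ``stated rules'' verbatim as your source.
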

\begin{proof}
By the previous proposition we can place any of the $2^k$ cards on the top position.  Once that card is on top then repeated out shuffles will leave that card on top.  Examining what happens to the card in the second position we note that it will go into positions as follows:
\[
1\longrightarrow 2\longrightarrow 2^2\longrightarrow\cdots\longrightarrow 2^{k-1}\longrightarrow (2^k-1)\longrightarrow 1
\]
In particular the cards in these $k+1$ positions will cycle through the second position and so for every card in the first position there are at least $k+1$ cards that can be in the second position, establishing the result.
\end{proof}

To show that we can only get this many different arrangements we will first show that the horseshoe shuffles preserve a special ordering.  This is based off of the following diagram.

\bigskip
\noindent\hfil
\picAA
\bigskip

The way to use this is to start with some card (with face value labeled using binary) in the first position, and then choose some starting point in the diagram.  From the starting card we will now determine the ordering of the remaining cards where at each stage we duplicate the existing cards but then change either the indicated bit in our new duplicate set or complement as indicated in the diagram and then move to the next point of the diagram on the diagram and repeat.

As an example of this process, if the deck has $16$ cards and the first card is eleven (or in binary is $1011$) and the starting point is to change the bit corresponding to $2^2$ then this process will work as shown in Table~\ref{tab:process}.  This produces the ordering
\[
11,~15,~3,~7,~4,~0,~12,~8,~10,~14,~2,~6,~5,~1,~13,~9.
\]

\begin{table}[htf]
\caption{An example of determining the ordering}
\label{tab:process}
\small
\[
\begin{array}{|c|c|c|c|c|}\hline
\text{~~~~~initial~~~~~}&\text{change $2^2$ bit}&\text{change $2^3$ bit}&\text{~complement~}&\text{change $2^0$ bit}\\ \hline
1011&1011&1011&1011&1011\\
    &1111&1111&1111&1111\\
    &    &0011&0011&0011\\
    &    &0111&0111&0111\\
    &    &    &0100&0100\\
    &    &    &0000&0000\\
    &    &    &1100&1100\\
    &    &    &1000&1000\\
    &    &    &    &1010\\
    &    &    &    &1110\\
    &    &    &    &0010\\
    &    &    &    &0110\\
    &    &    &    &0101\\
    &    &    &    &0001\\
    &    &    &    &1101\\
    &    &    &    &1001\\ \hline
\end{array}
\]
\end{table}

To understand what is happening we note that upon splitting the deck in half and reversing the bottom half then the following two things are true:
\begin{itemize}
\item Each half deck is built off of the first $k-1$ operations used to form the original deck.  (This follows by the construction.)
\item The two half decks differ from each other exactly corresponding to a change in the diagram that was not used (i.e., if we started by flipping in the $2^2$ bit then the two differ in the $2^1$ bit).  This follows by noting that the card in the last position will have gone through all but one of the operations which when combined has the net effect of the missing operation, now combining this with the construction we see that the second to last entry differs in the first instruction, the prior two in the second instruction and so on.
\end{itemize}
With these two properties we see that when we shuffle the two halves together (either in or out) then the first pair will differ in the missing operation, then the next pair is formed by copying and applying the prior first operation, and so on. 

In particular if we started with a special ordering then we will maintain a special ordering after we shuffle.  Since we start with the cards in the order $0,1,\ldots,2^n-1$, which is special, then the only possible orderings are these special orderings.  But there are at most $2^k(k+1)$ such orderings, i.e., pick the first card and then pick one of $k+1$ operations to get the second.

Finally we mention that if we reversed the order of the deck that we would still have the same order of operations used to build up the sequence, i.e., $9$, $13$, $1$, $5$, \dots can be built by starting with $9$, then changing the $2^2$ bit, the $2^3$ bit, \dots.  We will use this in the next section in an application.

\section{Other shuffling methods}\label{sec:other}
The horseshoe shuffle is closely related to the \emph{milk shuffle} which consists of taking the deck and then ``milking'' off the top and bottom card and putting them on the table, then continue milking off the top and bottom card and placing on top of the pile on the table until all the cards have been used.  So for example if we have a deck with $2n=10$ cards the result is what is shown in Figure~\ref{fig:milk}.

\begin{figure}[hft]
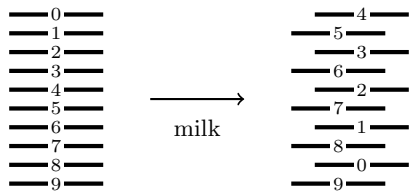

\centering
\picG
\caption{The effect of the milk shuffle for a deck with 10 cards}
\label{fig:milk}
\end{figure}

This looks very similar to Figure~\ref{fig:smale}.  In particular, if you draw a line connecting the cards in their original order you will see that the horseshoe forms the shape ``${\cup}$'' while the milk forms the shape ``$\cap$''.  So these shuffles are essentially the same, and one can be done by first flipping the deck, carrying out the other form of shuffling, and then flipping the deck back over.  This suggests there should be two milk shuffles, the second one consists of milking off the top and bottom cards and then before putting them on the pile switching the order, an inconvenient shuffle for performance.

Going further this same connection of flipping  also applies for the inverse of the milk shuffle, the \emph{Monge shuffle}.  This is carried out by taking the deck and placing the top card in the other hand, then we feed the top card of the deck into the other hand one card at a time alternating between being placed on the top or the bottom of the pile.  (When the second card is placed under the first this corresponds to the inverse of the milk shuffle, though the second card can also be placed over the first.)  This is illustrated in Figure~\ref{fig:Monge} for a deck with $2n=10$ cards.

\begin{figure}[hft]
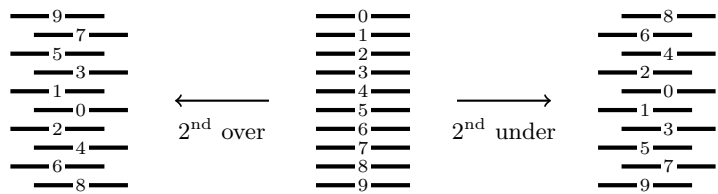

\centering
\picH
\caption{The effect of the two Monge shuffles for a deck with $10$ cards}
\label{fig:Monge}
\end{figure}

The milk and Monge shuffles have been extensively studied before (see \cite{magic,DGK,2power} and references therein; there are also many online videos which demonstrate how to perform these shuffles).

Returning to the previous section we note that the special ordering we discussed is preserved when we apply an inverse shuffling operation as well as when we flip the deck.  Therefore for deck sizes of $2^k$ then any application of combinations of the horseshoe shuffle, milk shuffle, Monge shuffle, or reversing the deck (i.e., dealing the deck into a pile) will still preserve our special ordering.  This can be used for a very simple trick which we outline here.

\subsection*{Working from the outside in}
We first describe the effect.  We start with a packet of eight cards and tell our audience ``In this small pile we have eight cards which we will now use to practice various shuffling techniques.''  At this point the deck is handed over to audience members where they are then taught about the different shuffles, i.e., the horseshoe, the milk, the Monge, and dealing the cards onto a pile (i.e., to reverse the order).  They are allowed and encouraged to do as many shuffles as they want in whatever order they choose.  

After much shuffling we now declare ``By now we have become somewhat expert as to how to shuffle the deck, but of course we have been shuffling so long that we have no idea of the current ordering of the cards.  But as with many problems we can solve what the ordering should be by working from the outside in.  So now please deal the cards down either from left to right or from right to left.''  At this point the cards are dealt (of course the dealing as well as all of the shuffling can occur with the performer having their back turned).  ``To figure out what cards we have we need to start somewhere and so will you please turn over the end cards.''  The end cards are turned over, and the magic begins!

Let us suppose that what we see is the following:
\[
4~?~?~?~?~?~?~6
\]
We continue, ``Aha! the $4$ and the $6$, an interesting combination.  The $4$ is a power of $2$ so wants to be with another power of $2$ so the next card could be a $2$ or an $8$ and in this case it is an $8$, similarly the number $6$ is composite so it wants to be next to one of its prime divisors which are $2$ or $3$ and in this case it is next to the $2$ \dots'' this continues until all the cards have been declared, with various dubious or numerological reasons given as to what should come next, and cards are turned over as the declarations are made with each one being correct.  Until finally the ordering is given
\[
4 ~8~ 3 ~7~ 5 ~A~ 2 ~6.
\]
``And so we see that we can solve this problem by working from the outside in!''

The way this is done is to take the cards and initially order them $8A234567$ (either from top to bottom or bottom to top).  We will treat $8$ as $0$ for our computations and so what we have actually done is place the cards in increasing order, and in particular an ordering from Section~\ref{sec:poweroftwo} that obeys the rule of the diagram.  Then as the various shuffles are performed the ordering will change, but each time it will still obey the same diagram and so that once the cards are placed down it only remains to determine a starting card (and as noted we can start from either end) and where in the diagram we start from.  Taking the end cards we compute in binary how they differ, since $4$ and $6$ in binary are respectively $100$ and $110$ we see they differ in the $2^1$ bit (in general if the numbers add to $7$ they are complements otherwise the difference tells you which bit to work in).  Therefore we should start with the \emph{next} operation in our diagram which in this case is to flip the $2^2$ bit and so $100$ (i.e., the $4$) becomes $000$ (i.e., which would normally be $0$ but we represented by $8$), similarly on the other side $110$ becomes $010$ (i.e., the $2$) giving us our next layer in.  Now to get the remaining four we use the four cards showing and the next operation (in this case complementation) to determine what will come next.

This can be done fairly quickly with a bit of practice in binary, remembering that $8$ corresponds with $0$.  The biggest danger is that someone makes a mistake in shuffling in which case the ordering will be off.  So make sure to carefully walk the audience through each kind of shuffle so that they will not make a mistake; alternatively, you can carry out the shuffles as directed by audience members.

\section{Concluding remarks}\label{sec:conclusion}
As we have seen the flip and horseshoe shuffles have nice connections with known shuffles. As a result similar performance techniques can be used for these shuffles.  For instance to move the top card into the $i$th position (where the positions have been labeled as $0,1,\ldots$), then we again write $i$ in binary and translate that into in and out shuffles appropriately.  This works because as we move the card down to the $i$th position it will always remain in the top half as we shuffle and so behaves precisely like what happens with faro shuffling.  On the other hand the problem of moving a card from the $i$th position to the top position, known as Elmsley's problem (see \cite{elmsley, elmsley2}), will need different techniques.  For the deck with $10$ cards the shortest ways to move the card in the $i$th position to the top are given in Table~\ref{tab:elmsley}

\begin{table}[hft]
\caption{How to move the $i$th card to the top}
\label{tab:elmsley}
\small
\[
\begin{array}{|c|l|}\hline
\text{Position}&\text{Shuffling sequence}\\\hline\hline
1&\text{out, in, out, in ~~~~\textbf{or}~~~~ out, out, in, in}\\ \hline
2&\text{in, out, in ~~~~\textbf{or}~~~~ out, in, in} \\ \hline
3&\text{in, out, out, in ~~~~\textbf{or}~~~~ in, in, in, in}\\ \hline
4&\text{in, in} \\ \hline
5&\text{out, in} \\ \hline
6&\text{out, out, out, in ~~~~\textbf{or}~~~~ out, in, in, in} \\ \hline
7&\text{out, out, in ~~~~\textbf{or}~~~~ in, in, in} \\ \hline
8&\text{in, in, out, in ~~~~\textbf{or}~~~~ in, out, in, in} \\ \hline
9&\text{in} \\ \hline
\end{array}
\]
\end{table}

\begin{problem}
Determine a general method for a deck of $2n$ cards that moves the card in the $i$th position to the top using in and out horseshoe shuffles.
\end{problem}

We also note that we have carried out the analysis for decks with an even number of cards.  There are also two types of perfect flip and horseshoe shuffles for decks of odd order.  We leave the analysis of this case as an exercise for the interested reader.

One annoying problem is that we couldn't identify the group generated by in and out horseshoe shuffles for decks of size $2^k$. We know it has order $(k+1)2^k$ for $k$ larger than $1$, it is $S_2$ when $k =1$ and the alternating group $A_4$ when $k = 2$, what is it for larger k?

The shuffles we have mentioned here are some of many possible shuffles which exhibit interesting mathematical properites.  More information about the mathematics behind various shuffles and how it can be used to amaze and entertain friends and colleagues can be found in the recent books of Diaconis and Graham \cite{magic} as well as Mulcahy \cite{colm}.


\bigskip
\noindent\textbf{Acknowledgment:}  The authors thank Jeremy Rayner for introducing them to flip shuffles.

\end{document}